%
%
%
%
\documentclass[12pt]{amsart}
\usepackage{amssymb,latexsym,amsmath,amscd,amsthm,graphicx, color}
\usepackage[all]{xy}
\usepackage{pgf,tikz}
\usepackage{mathrsfs}
\usepackage{breqn}
\usepackage{amsmath}
\usepackage{fancyhdr}
\usepackage[numbers]{natbib}
\usepackage[english]{babel}
\usepackage[autostyle]{csquotes}
\usepackage{verbatim}
\DeclareFontFamily{U}{mathx}{\hyphenchar\font45}
\DeclareFontShape{U}{mathx}{m}{n}{
      <5> <6> <7> <8> <9> <10>
      <10.95> <12> <14.4> <17.28> <20.74> <24.88>
      mathx10
      }{}
\DeclareSymbolFont{mathx}{U}{mathx}{m}{n}
\DeclareFontSubstitution{U}{mathx}{m}{n}
\DeclareMathAccent{\widecheck}{0}{mathx}{"71}
\DeclareMathAccent{\wideparen}{0}{mathx}{"75}

\usetikzlibrary{arrows}
\definecolor{uuuuuu}{rgb}{0.26666666666666666,0.26666666666666666,0.26666666666666666}
\definecolor{xdxdff}{rgb}{0.49019607843137253,0.49019607843137253,1.}
\definecolor{ffqqqq}{rgb}{1.,0.,0.}

\setlength{\oddsidemargin}{0 in} \setlength{\evensidemargin}{0 in}
\setlength{\textwidth}{6.75 in} \setlength{\topmargin}{-.6 in}
\setlength{\headheight}{.00 in} \setlength{\headsep}{.3 in }
\setlength{\textheight}{10 in} \setlength{\footskip}{0 in}

\newtheorem{theorem}{Theorem}[section]

\theoremstyle{definition}
\newtheorem{remark}[subsection]{Remark}
\newtheorem{definition}[subsection]{Definition}

\theoremstyle{remark}

\newtheorem{proposition}[subsection]{Proposition}
\numberwithin{equation}{section}
\def\ni{\noindent}











\title[Optional running title]{Actual title}


\begin{document}
\title[Dilation Operators in  Besov spaces over Local Fields]{Dilation Operators in  Besov spaces over Local Fields}

%
\author{Salman Ashraf, Qaiser Jahan}
\address{School of Basic Sciences, Indian Institute of Technology Mandi\\ Kamand (H.P.) - 175005, India}
\email{ashrafsalman869@gmail.com; qaiser@iitmandi.ac.in (Email of corresponding author)}






\subjclass[2010]{11F85,	30H25 , 46F05, 	47A20.} 
\keywords{Local fields, Besov spaces, Dilation operators, Localization Property.}

\begin{abstract}
We consider a dilation operator on Besov spaces $(B^s_{r,t}(K))$ over local fields and estimate an operator norm on such a field for $s > \sigma_r = \text{max}\big(\frac{1}{r} -1,~0\big)$ which depends on the constant $k$ unlike the case of Euclidean spaces. In $\mathbb{R}^n$, it is independent of constant. A constant $k$ appears for liming case $s=0$ and $s=\sigma_r$. In case of local fields, the limig case is still open. Further we also estimate the localization property of Besov spaces over local fields.

\end{abstract}

\maketitle



\section{\textbf{Introduction}} 

Besov spaces $B^s_{pq}(\mathbb{R}^n)$ and Triebel-Lizorkin spaces $F^s_{pq}(\mathbb{R}^n)$ with $s \in \mathbb{R},~ 0 < p \leq \infty~(p < \infty~\text{for the F-type spaces}),~0 < q \leq \infty$  on the Euclidean spaces $\mathbb{R}^n$ are usually considered to be two very general scales of function spaces. These two spaces $B^s_{pq}(\mathbb{R}^n)$ and $F^s_{pq}(\mathbb{R}^n)$ contain well-known classical function spaces as special cases, such as  Lebesgue spaces $L_p = F^0_{p,2}(\mathbb{R}^n) $ (with $1 < p < \infty$), Sobolev spaces $W^m_p = F^m_{p,2}(\mathbb{R}^n) $ (with $m \in \mathbb{N},~ 1 < p < \infty$), H\"older spaces $C^s = B^s_{\infty,\infty}(\mathbb{R}^n)$ (with $0 < s  \neq \mathbb{N}$). These function spaces have been extensively studied by H. Triebel, we refer \cite{book_impulse7,book_impulse8} for more details. The theory of partial differential equation is one of the main applications of these two spaces. Dilation operator has been investigated on many function spaces, including the Besov spaces. We state the following results summarizing the behavior of dilation operator on Besov spaces.

In \citep[3.4]{book_impulse7}, Triebel considered the dilation operators of the form  
\begin{equation} \label{r1}
(T_kf)(x) = f(2^kx),\qquad x \in \mathbb{R}^n ,\quad k \in \mathbb{N},
\end{equation}

\ni and showed that for  $0 < p,q \leq \infty$, and $\infty > s > \sigma_p = \text{max}\big(n\big(\frac{1}{p} -1\big),~0\big)$, $T_k$ represent bounded operators from $B^s_{pq}(\mathbb{R}^n)$ into itself and in this case we have
\begin{equation} \label{r2}
\|T_kf ~ \big| ~B^s_{pq}(\mathbb{R}^n) \| \leq c  2^{k(s-\frac{n}{p})} \|f ~\big|~  B^s_{pq}(\mathbb{R}^n) \|. 
\end{equation}
 
Boundedness of $T_k$ for  the limiting case $s= 0$ and $s=\sigma_p$ remained open until Vyb$\acute{ı}$ral in \cite{VJ} and Schneider in \cite{SC}, respectively, gave the final answer and showed that the additional constant $k$ will appear in the calculation of the norm of dilation operator $T_k$. The norm calculated by Vyb$\acute{ı}$ral in \cite{VJ} is as follows.

For $0 < q \leq \infty ,$
\begin{equation} \label{r3}
\|T_k ~ \big| ~\mathcal{L}(B^0_{pq}(\mathbb{R}^n)) \|   \sim   2^{-k\frac{n}{p}} \begin{cases} k^{\frac{1}{q}-\frac{1}{\text{max}(p,q,2)}},~~~\text{if},~~~1 < p < \infty \\
k^{\frac{1}{q}},~~~~\text{if}~~p=1~~~ \text{or}~~~ p=\infty,
\end{cases}
\end{equation}
and by Schneider \cite{SC} is given by,

\begin{equation} \label{r4}
\|T_k ~ \big| ~\mathcal{L}(B^{\sigma_p}_{pq}(\mathbb{R}^n)) \|   \sim   2^{k(\sigma_p-\frac{n}{p})}k^{\frac{1}{q}}, ~~~\text{for}~0 < p \leq 1~ \text{and} ~0 < q \leq \infty,
\end{equation}

\ni where $\|T_k ~ \big| ~\mathcal{L}(B^{s}_{pq}(\mathbb{R}^n)) \|$ denotes the norm of the operator $T_k$ from $B^{s}_{pq}(\mathbb{R}^n)$ into itself.

The boundedness of the dilation operators are studied not only for theoretical aspect, but also it has major applications to several classical problems in analysis. For instance, they appear in the localization of $B^{s}_{pq} $ and $F^{s}_{pq}$ spaces   and H\"older inequalities and sharp embeddings of $ B^ s_ {pq}$ and $F^ s_ {pq} $ spaces (see \citep[chap 2]{book_impulse2}, \cite{book_impulse11}, \cite{book_impulse12}).

The main aim of the present paper is to study dilation operators in the framework of Besov spaces on local fields. In particular, we extend the results obtained by Triebel in \citep[3.4]{book_impulse7}. A local field $K$ is a locally compact, totally disconnected, non-Archimedian norm valued and non-discrete topological field, we refer \cite{book_impulse6} to basic Fourier analysis on local fields. The local fields are essentially of two types (excluding the connected local fields $\mathbb{R}$ and $\mathbb{C}$) namely characteristic zero and of positive characteristic. The characteristic zero local fields include the $p$-adic field $\mathbb{Q}_p$ and the examples of positive characteristic are the Cantor dyadic groups, Vilenkin $p$-groups and $p$-series fields.

Besov spaces $B^s_{rt}(K)$ and Triebel-Lizorkin spaces $F^s_{rt}(K)$ on local fields were introduced and studied by Onneweer and Su \cite{Os}. In \cite{book_impulse4}, Weiyi Su, introduced, \textquote{$p$-type smoothness} of the functions defined on local fields. To complete the theoretical base of the Function spaces on local fields and to broaden the range of its applications, a series of studies have been carried out such as the construction theory of functions on local fields \cite{book_impulse10}, the Weierstrass functions, Cantor functions and their $p$-adic derivatives in local fields, the Lipschitz classes on local fields etc. (see \cite{book_impulse5}). Wavelet theory on local fields has also been developed widely. Jiang, Li and Jin \cite{JLJ} have introduced the concept of multiresolution analysis and wavelet frames on local fields \cite{JL}. Later, Behera and Jahan have developed the theory of wavelets on such a field in a series of papers \cite{BJ1, BJ2, BJ3}.

This article is organized as follows. In Section 2, we provide a brief introduction to local fields and  test function class and distribution on such a field. We have also provided the definition of $p$-type derivatives and Besov spaces $B^s_{rt}(K)$ on local fields. In section 3, we study the dilation operators
\begin{equation} \label{eq1}
(T_kf)(x) = f(\mathfrak{p}^{-k}x),\qquad x \in K ,\quad k \in \mathbb{N},
\end{equation}

\ni in the framework of Besov spaces $B^s_{rt}(K)$ and more precisely, we shall prove the following result:

\begin{theorem} Let $0 < r ,t \leq \infty $, $ s > \sigma_r = \text{max}\big(\frac{1}{r} -1 , 0\big), ~k \in \mathbb{N}$ and $T_k$ defined by (\ref{eq1}). Then 
\begin{align*}
\|T_kf ~ \big| ~{ B_{rt}^s}(K) \| \leq c k^{1/t} q^{k(s-\frac{1}{r})} \|f ~\big|~ { B_{rt}^s}(K) \|,
\end{align*}

\ni for some, $c$ which is independent of $k$ and for all $f \in  B_{rt}^s(K).$
\end{theorem}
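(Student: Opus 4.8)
The plan is to push everything through the dyadic frequency decomposition that defines the norm of $B^s_{rt}(K)$, following broadly the scheme Triebel used in the Euclidean case \cite{book_impulse7}. Write $\mathcal{F}$ for the Fourier transform on $K$, normalised so that the ring of integers $\mathfrak{D}$ has Haar measure one, and let $f_j:=\mathcal{F}^{-1}(\Phi_j\widehat{f})$ denote the frequency pieces of Section~2, so that $\Phi_0=\mathbf{1}_{\mathfrak{D}}$, $\Phi_j$ is carried by the shell $\{\,|\xi|=q^j\,\}$ for $j\ge1$, $\sum_{j\ge0}\Phi_j\equiv1$, and $\|f\mid B^s_{rt}(K)\|\sim\big(\sum_{j\ge0}q^{jst}\|f_j\|_{L^r}^t\big)^{1/t}$ (with the usual supremum modification when $t=\infty$). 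The only facts I need about $T_k$ itself are the scaling identities $\mathcal{F}(T_kh)(\xi)=q^{-k}\widehat{h}(\mathfrak{p}^k\xi)$ and $\|T_kh\|_{L^r}=q^{-k/r}\|h\|_{L^r}$, both immediate by a change of variables.

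First I would record how $T_k$ moves frequencies; the ultrametric structure of $K$ makes the supports behave exactly rather than up to bounded overlap. If $h$ is frequency-localised to $\{\,|\xi|=q^m\,\}$ then $T_kh$ is frequency-localised to $\{\,|\xi|=q^{m+k}\,\}$, and if $\widehat{h}$ is supported in $\mathfrak{D}$ then $\widehat{T_kh}$ is supported in $\mathfrak{p}^{-k}\mathfrak{D}$. Combining this with $\sum_j\Phi_j\equiv1$ and the fact that $\Phi_0$ already captures all of $\mathfrak{D}$, one checks directly that
\[
(T_kf)_n=T_k\big(f_{n-k}\big)\quad(n\ge k+1),\qquad (T_kf)_n=(T_kf_0)_n\quad(0\le n\le k),
\]
so the high-scale pieces of $T_kf$ are exact dilates of pieces of $f$, while the $k+1$ low-scale pieces all come from the single band-limited function $T_kf_0$. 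For $n\ge k+1$ I substitute the first identity, use $\|T_k(f_{n-k})\|_{L^r}=q^{-k/r}\|f_{n-k}\|_{L^r}$, and re-index by $m=n-k$:
\[
\sum_{n\ge k+1}q^{nst}\|(T_kf)_n\|_{L^r}^t=q^{kt(s-1/r)}\sum_{m\ge1}q^{mst}\|f_m\|_{L^r}^t\le q^{kt(s-1/r)}\|f\mid B^s_{rt}(K)\|^t,
\]
so this part alone contributes $q^{k(s-1/r)}\|f\mid B^s_{rt}(K)\|$, with no $k$-factor.

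The low-scale part is the crux, and the step I expect to be the main obstacle: for $0<r<1$ the space $L^r$ is only quasi-Banach, where Fourier projections and local averages are genuinely unbounded, so one has to exploit the band-limited structure directly. Set $g:=T_kf_0$; then $\widehat{g}$ is supported in $\mathfrak{p}^{-k}\mathfrak{D}$, which by ultrametricity means exactly that $g$ is constant on the cosets of $\mathfrak{p}^k\mathfrak{D}$, and $\|g\|_{L^r}=q^{-k/r}\|f_0\|_{L^r}$. Since $\mathcal{F}^{-1}\Phi_n$ is a linear combination of $\mathbf{1}_{\mathfrak{p}^n\mathfrak{D}}$ and $\mathbf{1}_{\mathfrak{p}^{n-1}\mathfrak{D}}$ with coefficients $q^n,-q^{n-1}$, the piece $(T_kf_0)_n=g*\mathcal{F}^{-1}\Phi_n$ is a difference of at most two ball-averages $A_mg(x)=|\mathfrak{p}^m\mathfrak{D}|^{-1}\int_{x+\mathfrak{p}^m\mathfrak{D}}g$ with $m\le k$. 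Writing $g=\sum_c a_c\mathbf{1}_{c+\mathfrak{p}^k\mathfrak{D}}$ and using the embedding $\ell^r\hookrightarrow\ell^1$ ($r\le1$) over the $q^{k-m}$ cosets of $\mathfrak{p}^k\mathfrak{D}$ contained in each ball of scale $m$, I get the averaging estimate
\[
\|A_mg\|_{L^r}\le q^{(k-m)\sigma_r}\|g\|_{L^r}\qquad(0\le m\le k),
\]
which for $r\ge1$ is just the contraction bound ($\sigma_r=0$). Hence $\|(T_kf_0)_n\|_{L^r}\lesssim q^{(k-n)\sigma_r}q^{-k/r}\|f_0\|_{L^r}$, and multiplying by $q^{ns}$ and using $n\le k$ together with $s\ge\sigma_r$,
\[
q^{ns}\|(T_kf_0)_n\|_{L^r}\lesssim q^{ns+(k-n)\sigma_r-k/r}\|f_0\|_{L^r}\le q^{ks-k/r}\|f_0\|_{L^r}=q^{k(s-1/r)}\|f_0\|_{L^r}
\]
uniformly in $0\le n\le k$. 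Raising to the $t$-th power, summing over the $k+1$ indices, and using $\|f_0\|_{L^r}\lesssim\|f\mid B^s_{rt}(K)\|$ (immediate from the norm, since $s>0$) gives
\[
\Big(\sum_{n=0}^{k}q^{nst}\|(T_kf)_n\|_{L^r}^t\Big)^{1/t}\lesssim(k+1)^{1/t}q^{k(s-1/r)}\|f\mid B^s_{rt}(K)\|,
\]
which is precisely where the factor $k^{1/t}$ enters; for $t=\infty$ it collapses to $1$, consistently, since only the uniform-in-$n$ bound is then used.

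Finally, combining the high- and low-scale estimates with the quasi-triangle inequality in $\ell^t$ — the constant depending only on $r,t,s$ and not on $k$ — yields $\|T_kf\mid B^s_{rt}(K)\|\le c\,k^{1/t}q^{k(s-1/r)}\|f\mid B^s_{rt}(K)\|$, as claimed. The genuinely delicate point is the averaging estimate for $0<r<1$: no soft Fourier-multiplier argument is available, so the loss $q^{(k-m)\sigma_r}$ has to be read off from the step-function description of band-limited functions on $K$, and it is precisely the hypothesis $s>\sigma_r$ that makes this loss harmless once it is weighted by $q^{ns}$.
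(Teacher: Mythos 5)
Your argument is correct, and its overall architecture coincides with the paper's: the same scaling identity for $\mathcal{F}(T_kf)$, the same split of the frequency sum at $j=k$, the same exact reindexing for $j\ge k+1$ (giving the clean $q^{k(s-1/r)}$ with no $k$-loss), and the same observation that all $k+1$ low-frequency pieces of $T_kf$ come from the single band-limited block $f_0$, which is exactly where the factor $k^{1/t}$ is produced. Where you genuinely diverge is in the proof of the low-frequency estimate. The paper invokes the Fourier multiplier theorem (Theorem \ref{thm1}, from Zhou--Su) with $M=\phi_0(\mathfrak{p}^{-k}\cdot)$ resp.\ $M=\phi_1(\mathfrak{p}^{j-k-1}\cdot)$, computes $\|M\mid H^\sigma_2\|\sim q^{(k-j+1)(\sigma-1/2)}$ by scaling, and then chooses $\sigma$ with $\sigma_r<\sigma-\tfrac12<s$ to absorb the loss into $q^{(k-j)s}$. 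You instead exploit that $\mathcal{F}^{-1}\phi_n$ is a difference of normalized ball indicators, so each low piece is a difference of two ball averages of the locally constant function $T_kf_0$, and you prove the averaging bound $\|A_mg\|_{L^r}\le q^{(k-m)\sigma_r}\|g\|_{L^r}$ directly from subadditivity of $x\mapsto x^r$ over the $q^{k-m}$ cosets (I checked the exponent: it comes out exactly to $\sigma_r=\max(1/r-1,0)$). Your route buys a self-contained, elementary proof that needs no auxiliary multiplier machinery and produces the sharp loss $q^{(k-n)\sigma_r}$ rather than $q^{(k-n)(\sigma-1/2)}$ for an auxiliary $\sigma$; it also sidesteps the paper's slightly awkward ``we may assume $s>\sigma-1/2$'' step. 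What it gives up is generality: the paper's multiplier argument works verbatim for any admissible system in $A(\Gamma)$, whereas your computation is tied to the specific choice $\phi_j=\Phi_{\Gamma^j\setminus\Gamma^{j-1}}$ (harmless here, since the paper fixes that system and the spaces are independent of the choice). Two cosmetic points: you should say $|\sum_c a_c|^r\le\sum_c|a_c|^r$ rather than appeal to ``$\ell^r\hookrightarrow\ell^1$'' over infinitely many cosets, and you use $s\ge\sigma_r$ where the hypothesis gives the strict inequality $s>\sigma_r$ (which of course suffices).
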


Note that, in the case of $B_{rt}^s(K)~(\text{with}~s > \sigma_r),$ Besov norm of operators $T_k$ (defined by (\ref{eq1})) depends on $k$ by the constant $k^{1/t},$ whereas, in the case of $B^s_{pq}(\mathbb{R}^n)$, Besov norm of operators $T_k$ (defined by (\ref{r1}))  is independent of the constant $k$ for $s>\sigma_r$, see \cite{book_impulse7}, in the case of Euclidean spaces it depends on $k$ in the limiting case $s= 0$ and $s=\sigma_p$, see \cite{book_impulse11} for more details. For local fields, the limiting case is still open.

In section 4, we have established the localization property for $ B_{rt}^s(K).$ Construction of type (\ref{1}), involves a compactly supported distribution which is dilated and translated. Such type of localization property for Besov spaces on $\mathbb{R}^n$ is well known; see \cite{book_impulse8,book_impulse11}.

\section{\textbf{Preliminaries}}

\subsection{Local Fields}
Let $K$ be a field and a topological space. If the additive group $K^{+}$ and multiplicative group $K^{*}$ of $K$ both are locally compact Abelian group, then $K$ is called \textit{locally compact topological field}, or \textit{local field.}

If $K$ is any field and is endowed with the discrete topology, then $K$ is a local field. Further, if $K$ is connected, then $K$ is either $\mathbb{R}$ or $\mathbb{C}$. If $K$ is not connected, then it is totally disconnected. So, here local fields means a field $K$ which is locally compact, non-discrete and totally disconnected. If $K$ is of characteristic zero, then K is either a $p$-adic field for some prime number $p$ or a finite algebraic extension of such a field. If $K$ is of positive characteristic, then $K$ is either a field of formal Laurent series over a finite field of characteristic $p$ or an algebraic extension of such a field.

Let $K$ be a local field. Since $K^{+}$ is a locally compact abelian group, we choose a Haar measure $dx$ for $K^{+}$. If $ \alpha \neq 0,~ \alpha \in K$, then $d(\alpha x)$ is also a Haar measure and by the uniqueness of Haar measure we have $d(\alpha x)= |\alpha|dx$. We call $|\alpha|$ the \textit{absolute value} or \textit{valuation} of $\alpha$ which is, in fact, a natural non-Archimedian norm on $K$. A mapping $|\cdot|~ : K \rightarrow \mathbb{R}$ satisfies:

\begin{itemize}
\item[(a)] $|x|=0$ if and only if $x=0$;
\item[(b)] $|xy|= |x||y|$ for all $x,y \in K$;
\item[(c)] $|x+y| \leq \text{max} \{|x|, |y|\},$  for all $x,y \in K$, \
\end{itemize}

The property $(c)$ is called the \textit{ultrametric inequality}. It follows that
\begin{equation*}
|x+y| = \text{max} \{|x|, |y|\}~ \text{if}~ |x| \neq |y|.
\end{equation*}

The set $\mathfrak{D}= \{x \in K : |x| \leq 1\}$ is called the ring of integers in $K$. It is the unique maximal compact subring of $K$. The set $\mathfrak{P}=\{x \in K : |x| < 1\}$ is called the prime ideal in $K$. It is the unique maximal ideal in $\mathfrak{D}$.  It can be proved that $\mathfrak{D}$ is compact and open and hence $\mathfrak{P}$ is also compact and open. Therefore, the residue space $\mathfrak{D}/\mathfrak{P}$ is isomorphic to a finite field $GF(q)$, where $q =p^c$ for some prime number $p$ and $c \in \mathbb{N}$.

Let $\mathfrak{p}$ be a fixed element of maximum absolute value in, $\mathfrak{P}$ and it is called a \textit{prime element} of $K$. For a measurable subset $E$ of $K$, let $|E|=\int_K\bold{1}_E(x)dx$, where $\bold{1}$ is the indicator function of $E$ and $dx$ is the Haar measure of $K$ normalized so that $|\mathfrak{D}|=1$. Then it is easy to prove that $|\mathfrak{P}| = q^{-1}$ and $|\mathfrak{p}| = q^{-1}$ (see~\cite{book_impulse6}).

Let $\mathcal{U}=\{a_i\}_{i=0}^{q-1}$ be any fixed full set of coset representative of $\mathfrak{P}$ in, $\mathfrak{D}$ then each $x \in K$ can be written uniquely as
$x =\sum\limits_{l=k}^{\infty}c_l\mathfrak{p}^k$, where $c_l\in\mathcal{U}$.
If $x (\neq 0) \in K$ then $|x| =q^k$ for some $ k \in \mathbb{Z}.$

Let $\mathfrak{D}^{*} = \mathfrak{D}\setminus \mathfrak{P} = \{x \in \mathfrak{D} : |x|=1\}$; $\mathfrak{D}^*$ is the group of units in $K^*$. If $x \neq 0$, we can write as $x = \mathfrak{p}^k x^\prime$, with $ x^{\prime}\in  \mathfrak{D}^*$. Let $\mathfrak{P}^k = \mathfrak{p}^k \mathfrak{D} = \{x \in K : |x| \leq q^{-k}\}~, k \in \mathbb{Z}$. These are called \textit{fractional ideals}.

The set $\{\mathfrak{P}^k \subset K: k \in \mathbb{Z}\}$ satisfies the following:

\begin{itemize}
\item[(\romannumeral 1)] $\{\mathfrak{P}^k \subset K : k \in \mathbb{Z}\}$ is a base for neighborhood system of identity in $K$, and $\mathfrak{P}^{k+1} \subset \mathfrak{P}^k,~ k \in \mathbb{Z};$
\item[(\romannumeral 2)] $\mathfrak{P}^k ,~ k \in \mathbb{Z},$ is open, closed and compact in $K$;
\item[(\romannumeral 3)] $K = \bigcup\limits_{{k=-\infty}}^{+\infty} \mathfrak{P}^k~~\text{and}~~\{0\}=\bigcap\limits_{{k=-\infty}}^{+\infty} \mathfrak{P}^k.$
\end{itemize} 

Let $\Gamma$ be the character group of the additive group $K^{+}$ of $K$. There is a nontrivial character $\chi \in \Gamma$ which is trivial on $\mathfrak{D}$ but is non-trivial on $\mathfrak{P}^{-1}$. Let $\chi$ be a non-trivial character on $K^{+}$, then the corresponding relationship $\lambda \in K \longleftrightarrow \chi_{\lambda} \in \Gamma$ is determined by $\chi_{\lambda}(x)=\chi(\lambda x),$ and the topological isomorphism is established for $K$ and $\Gamma$, moreover, we have $\Gamma=\{ \chi_{\lambda} : \lambda \in K \}.$

For $k \in \mathbb{Z}$, let $\Gamma^{k}$ be the annihilator of $\mathfrak{P}^{k},$ that is,

\begin{align*}
 \Gamma^{k} = \{\chi \in \Gamma : \forall x \in \mathfrak{P}^{k} \implies \chi(x)=1\}.
\end{align*}

The locally compact group $\Gamma=\{ \chi_{\lambda} : \lambda \in K \}$ can be endowed with the non-Archimedian norm as $ \Gamma^{k}=\{ \chi_{\lambda} \in \Gamma : |\lambda| \leq q^{k}\}=\mathfrak{p}^{-k}\mathfrak{D},~k \in \mathbb{Z}.$

For the Haar measure $dx$ of $K^{+}$, let $d\xi$ be the Haar measure on $\Gamma$, chosen such that 

$$|\Gamma^{0}|=|\mathfrak{D}|=1, ~\text{and}~|\Gamma^{k}| = \dfrac{1}{|\mathfrak{P}^{k}|} = q^{k}.$$

The set $\{\Gamma^k \subset \Gamma :~ k \in \mathbb{Z}\}$ satisfies the following properties:

\begin{itemize}
\item[(\romannumeral 1)] $\Gamma^{k} \subset \Gamma^{k+1},~ k \in \mathbb{Z}$, are increasing sequence in $\Gamma$ which is open, closed and compact;
\item[(\romannumeral 2)] $\Gamma = \bigcup\limits_{{k=-\infty}}^{+\infty} \Gamma^k~~\text{and}~~\{I\}=\bigcap\limits_{{k=-\infty}}^{+\infty} \Gamma^k,$ where $I$ is the unit element of $\Gamma$;
\item[(\romannumeral 3)] $\{\Gamma^k\}_{k\in \mathbb{Z}}$ is a base of the unit $I$ of character group $\Gamma$.
\end{itemize}

\subsection{Function classes}

In this subsection we will introduce some function classes:

\textit{Test function class $S(K)$:} It is the linear space in which functions have the form

$$ \phi(x)= \sum_{j=1}^{n} c_j  \Phi_{\mathfrak{P}^{j}}(x-h_j),~c_j \in \mathbb{C}, h_j \in K, j \in \mathbb{Z},n \in \mathbb{N},$$ 

where $\Phi_{\mathfrak{P}^{j}}$ is the characteristic function of $\mathfrak{P}^{j}.$ The space $S(K)$ is an algebra of continuous functions with compact support that separates points. Consequently, $S(K)$ is dense in $C_0(K)$ as well as $L^r(K),~1 \leq r < \infty.$ Similarly, the test function class $S(\Gamma)$ on $\Gamma$ can be defined. However, since $K$ is isomorphic to $\Gamma$, so $S(K)$ and $S(\Gamma)$ can be regarded as equivalent with respect to absolute value or valuation.

The space $S(K)$ is equipped with a topology as a topological vector space as follows: Define a null sequence in $S(K)$ as a sequence $\{\phi_n\}$ of functions on $S(K)$ in such a way that each $\phi_n$ is constant on cosets of $\mathfrak{P}^l$ and is supported on $\mathfrak{P}^k$ for a fixed pair of integers $k$ and $l$ and the sequence converges to zero uniformly. The space $S(K)$ is complete and separable and is called the \emph{space of testing function}.

Since $S(K)$ is dense in $L^1(K)$, thus the Fourier transformation of $\phi(x) \in S(K)$ is defined by

$$\widehat{\phi}(\xi) \equiv (\mathcal{F}\phi)(\xi) = \int_{K} \phi(x)\bar{\chi}_\xi(x)dx,~~~  \xi  \in \Gamma,$$

and the inverse Fourier transformation of $\phi \in S(K)$ is defined by the formula 

$$ \widecheck{\phi}(x) \equiv (\mathcal{F}^{-1}\phi)(x) = \int_{\Gamma} \phi(\xi)\chi_x(\xi)d\xi,~~~  x  \in K.$$

$S^{\prime}(K)$, the \textit{space of distributions}, is a collection of continuous linear functional on $S(K)$. $S^{\prime}(K)$ is also a complete topological linear space. The action of $f$ in $S^{\prime}(K)$ on an element $\phi$ in $S(K)$ is denoted by $\langle f, \phi \rangle$. The distribution space $S^{\prime}(K)$ is given the weak$\ast$ topology. Convergence in $S^{\prime}(K)$ is defined in the following way: $f_k$ converges to $f$ in $S^{\prime}(K)$ if  $\langle f_k, \phi \rangle$ converges to $\langle f,\phi\rangle$ for any $\phi \in S(K).$

The Fourier transformation $\widehat{f}$ of a distribution  $f \in S^{\prime}(K)$ is defined by

$$ \langle \widehat{f}, \phi \rangle ~ = ~ \langle f, \widehat{\phi} \rangle~~~\text{for~all}~ \phi \in S(K),$$ 

 the inverse Fourier transformation $\widecheck{g}$ is defined by 

$$ \langle \widecheck{g}, \psi \rangle = \langle g, \widecheck{\psi} \rangle~~~\text{for~all}~ \psi \in S(K).$$

\subsection{Derivatives and Integrals} 
In \cite{book_impulse4}, Weiyi Su introduced $p$-type derivatives and $p$-type integrals for general locally
compact Vilenkin group using pseudo-differential operators, which also includes derivatives and integrals of fractional orders.

\begin{definition} 
Let $\alpha > 0$, if for a complex Haar measurable function $f : K \rightarrow \mathbb{C}$, the integral

$$T_{\langle\cdot\rangle^\alpha} f(x) \equiv \int_{\Gamma} \bigg\{\int_{K} \langle \xi \rangle^\alpha f(t) \bar{\chi}_\xi(t-x)dt \bigg\} d\xi$$

\ni exists at $x \in K$, then $T_{\langle\cdot\rangle^\alpha} f(x)$ is said to be a point-wise $\alpha$-order $p$-type derivative of $f(x)$ at $x$, and it is denoted by $f^{\langle\alpha\rangle}(x).$
\end{definition}

\begin{definition} Let $\alpha > 0$, if for a complex Haar measurable function $f : K \rightarrow \mathbb{C}$, the integral

$$T_{\langle\cdot\rangle^{-\alpha}} f(x) \equiv \int_{\Gamma} \bigg\{\int_{K} \langle \xi \rangle^{-\alpha} f(t) \bar{\chi}_\xi(t-x)dt \bigg\} d\xi$$

\ni exists at $x \in K$, then $T_{\langle\cdot\rangle^{-\alpha}} f(x)$ is said to be a point-wise $\alpha$-order $p$-type integral of $f(x)$ at $x$, denoted by $f_{\langle\alpha\rangle}(x).$
\end{definition}

\begin{remark} We suppose that $ \alpha > 0$ in the above definitions, thus the order $\alpha$ of $p$-type derivatives and integrals can be any positive real numbers, thus, fractional order derivatives and fractional order integrals all are contained. Moreover, for $\alpha =0$, 
$$f^{\langle 0 \rangle}(x) = f(x) = f_{\langle 0 \rangle}(x)~~\text{for}~x\in K.$$
\end{remark}

\subsection{Function spaces on Local fields}

\begin{definition} \label{def2} 
 Let $S(K)$ and $S(\Gamma)$ be the test function classes on $K$ and on $\Gamma$, respectively, $S^{\prime}(K)$ and $S^{\prime}(\Gamma)$ are their distribution spaces, respectively.

Take a sequence $\{\phi_j(\xi)\}_{j=0}^{+\infty} \subset S(\Gamma)$ satisfying 

\begin{itemize}
\item[(\romannumeral 1)]   
$\text{supp}~\phi_0 \subset \{\xi \in \Gamma  : |\xi|< q\}$,\\
$\text{supp}~\phi_j \subset \{\xi \in \Gamma : q^{j-1} < |\xi|< q^{j+1}\},~~~~j \in \mathbb{N}$
 
\item[(\romannumeral 2)] 
$\sum_{j=0}^{\infty} \phi_j(\xi)=1,~~\xi \in \Gamma.$

\item[(\romannumeral 3 )] $\bigg|(\widecheck{\phi}_j)^{\langle s \rangle}(x)\bigg| \leq c_s q^{-j+js},~s \in (0, +\infty),~~~j \in \mathbb{N}_0,~~x \in K,$

\ni where $(\widecheck{\phi}_j)^{\langle s \rangle}$ is the  point-wise $s$-order $p$-type derivatives of $\widecheck{\phi}_j$ and $c_s$ is a constant depending on $s$ only.
\end{itemize}

We denote  
$$ A(\Gamma)= \bigg\{ \{\phi_j\}_{j\in \mathbb{N}_0} \subset S(\Gamma)~\text{satisfying}~ (\romannumeral 1) - (\romannumeral 3)\bigg\}.$$
\end{definition}

Specially, for a local field $K$, we just need to take $\{\phi_j\}_{j=0}^{\infty} \subset A(\Gamma)$ as follows:
\begin{align*}
\phi_0(\xi) & = \Phi_{\Gamma^0}(\xi),\\
\phi_j(\xi) & = \Phi_{\Gamma^j \setminus \Gamma^{j-1}}(\xi),~~~j \in \mathbb{N},
\end{align*}
where $\Phi_{A}$ is the characteristic function of $A$. It satisfies the following conditions
\begin{itemize}
\item[(\romannumeral 1)] $\text{supp}~\Phi_{\Gamma^0} \subset \{\xi \in \Gamma  : |\xi|<q\}$,\\
$\text{supp}~\Phi_{\Gamma^j \setminus \Gamma^{j-1}} \subset \{\xi \in \Gamma : q^{j-1} < |\xi|< q^{j+1}\},~~~~j \in \mathbb{N},$\\

\item[(\romannumeral 2)] $ \Phi_{\Gamma^0}(\xi) + \sum_{j=1}^{\infty} \Phi_{\Gamma^{j}\setminus \Gamma^{j-1}}(\xi)=1,~~~~\xi \in \Gamma,$\\

\item[(\romannumeral 3 )] $\bigg|(\widecheck{\Phi}_{\Gamma^0})^{\langle s \rangle}(x)\bigg| \leq c_s,$\\
$\bigg|(\widecheck{\Phi}_{\Gamma^{j}\setminus \Gamma^{j-1}})^{\langle s \rangle}(x)\bigg| \leq c_s q^{-j+js},~s \in (0, +\infty),~~~j \in \mathbb{N},~~x \in K,$\\~\text{where}~$c_s$~\text{is a constant depending on} $s$ \text{only}.
\end{itemize}

\begin{definition} \label{def1}
Let $\{ \phi_j\}_{j=0}^{\infty} = \{\Phi_{\Gamma^0},~\Phi_{\Gamma^{j}\setminus \Gamma^{j-1}}\}_{j=1}^{\infty} \subset A(\Gamma),$ we define the operators $\Delta_j$ as follows
\begin{align*}
\Delta_j f =  \mathcal{F}^{-1} \Big(\phi_j \mathcal{F}f\Big),~~~~~~~~~~j \in \mathbb{N}_0,~~f \in S^{\prime}(K).
\end{align*}

Then we obtain the Littlewood-Paley decomposition,

$$f = \sum_{j=0}^{\infty} \Delta_j f,$$

of all $f \in S^{\prime}(K).$
\end{definition}

Note that $\Delta_j f \in S^{\prime}(K)$ for any $f \in S^{\prime}(K)$. 

We are providing the proof of convergence of Littlewood-Paley decomposition on local fields.

\begin{proposition} 
Let $u$ be in $S^{\prime}(K).$ Then, we have, $u = \sum_{j=0}^{\infty} \Delta_j u,$ in the sense of the convergence in the space $S^{\prime}(K).$
\end{proposition}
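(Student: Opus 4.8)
The plan is to verify the defining property of convergence in $S'(K)$ directly: for every $\phi\in S(K)$ we must show that the numbers $\big\langle \sum_{j=0}^N\Delta_j u,\phi\big\rangle$ converge to $\langle u,\phi\rangle$ as $N\to\infty$. The key observation is that, unlike over $\mathbb{R}^n$, on a local field this sequence is in fact \emph{eventually constant}, so no genuine limiting argument is needed.

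First I would transfer everything to the Fourier side. Using the definition of $\Delta_j$ together with the adjoint rules $\langle\widecheck g,\psi\rangle=\langle g,\widecheck\psi\rangle$ and $\langle g\psi_1,\psi_2\rangle=\langle g,\psi_1\psi_2\rangle$ (multiplication by a test function is a continuous operation on $S'$ and on $S$), one gets, for $\phi\in S(K)$,
\[
\langle \Delta_j u,\phi\rangle=\langle \mathcal{F}^{-1}(\phi_j\widehat u),\phi\rangle=\langle \phi_j\widehat u,\widecheck\phi\rangle=\langle \widehat u,\phi_j\widecheck\phi\rangle ,
\]
where I freely use the identification of $S(K)$ with $S(\Gamma)$ and of $S'(K)$ with $S'(\Gamma)$. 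Summing over $j=0,\dots,N$ and invoking the telescoping identity
\[
\sum_{j=0}^N\phi_j=\Phi_{\Gamma^0}+\sum_{j=1}^N\big(\Phi_{\Gamma^j}-\Phi_{\Gamma^{j-1}}\big)=\Phi_{\Gamma^N},
\]
valid because $\Gamma^{j-1}\subset\Gamma^j$, we obtain
\[
\Big\langle \sum_{j=0}^N\Delta_j u,\phi\Big\rangle=\langle \widehat u,\Phi_{\Gamma^N}\widecheck\phi\rangle .
\]
The decisive step, and the only place where the structure of a local field really enters, is that $\widecheck\phi=\mathcal{F}^{-1}\phi$ again belongs to the test function class, i.e.\ it is locally constant with compact support; hence there is an integer $M=M(\phi)$ with $\text{supp}\,\widecheck\phi\subseteq\Gamma^M$. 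For every $N\ge M$ we then have $\Phi_{\Gamma^N}\widecheck\phi=\widecheck\phi$ identically, so by Fourier inversion
\[
\Big\langle \sum_{j=0}^N\Delta_j u,\phi\Big\rangle=\langle \widehat u,\widecheck\phi\rangle=\langle u,\widehat{\widecheck\phi}\,\rangle=\langle u,\phi\rangle\qquad(N\ge M).
\]
Thus the partial sums stabilize at $\langle u,\phi\rangle$ and in particular converge to it; since $\phi\in S(K)$ was arbitrary, $u=\sum_{j=0}^\infty\Delta_j u$ in $S'(K)$.

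I do not anticipate a serious obstacle: the computation is a purely formal manipulation of the adjoint definitions of $\mathcal{F}$, $\mathcal{F}^{-1}$, and of multiplication by a test function. The one ingredient that deserves to be stated explicitly (or cited from the basic Fourier analysis on local fields in \cite{book_impulse6}) is the invariance of the test function class under the Fourier transform and its inverse, so that $\widecheck\phi$ is compactly supported in some $\Gamma^M$. It is precisely this fact that collapses the infinite series to a finite one and yields the convergence for free, in contrast with the Euclidean setting where one must instead estimate the Schwartz seminorms of $\big(\sum_{j=0}^N\phi_j\big)\widecheck\phi$ and pass to an honest limit.
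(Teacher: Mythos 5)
Your proof is correct and follows essentially the same route as the paper: both arguments dualize onto the test function and exploit $\sum_{j}\phi_j=1$ on the Fourier side. Your version is in fact slightly sharper at the final step — the paper only asserts that $\mathcal{F}(S_nf-f)=\bigl(\sum_{j=0}^{n}\phi_j-1\bigr)\mathcal{F}f$ tends to $0$ and concludes, whereas you make explicit that the compact support of $\widecheck{\phi}$ in some $\Gamma^M$ forces the partial sums of the pairing to be \emph{eventually constant}, which is exactly what is needed to get convergence in the topology of $S(K)$ and hence in $S'(K)$.
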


\begin{proof} Let $u \in S^{\prime}(K)$ and 
\begin{align*}
S_n u =\sum_{j=0}^{n}\Delta_{j} u.
\end{align*}
 We have 
\begin{align*}
< (I - S_n) u,~f >~ = ~ <u,~(I -S_n) f>,~~~~\text{for all}~~f \in S(K).
\end{align*} 

Thus, it is enough to prove that in the space $S(K),$ we have, 

\begin{align*}
f = \lim_{n \to \infty} S_n f.
\end{align*} 

For all $\xi \in K,$ we have
\begin{align*}
\mathcal{F}(S_n f -f)(\xi) &= \mathcal{F}\bigg(\sum_{j=0}^{n}\Delta_{j} f -f\bigg)(\xi)\\
 & = \sum_{j=0}^{n} \mathcal{F}(\Delta_{j} f)(\xi)-\mathcal{F}(f)(\xi)\\
 &=  \sum_{j=0}^{n} \phi_j \mathcal{F}(f)(\xi) - \mathcal{F}(f)(\xi)\\
 & = \bigg(\sum_{j=0}^{n} \phi_j -1 \bigg) \mathcal{F}(f)(\xi). 
\end{align*}

Since $\sum\limits_{j=0}^{\infty} \phi_j=1.$ Hence
\begin{align*}
 \lim_{n \to \infty} \mathcal{F}(S_n f -f)(\xi) = 0,
\end{align*}
and therefore,
\begin{align*}
f = \lim_{n \to \infty} S_n f.
\end{align*}
\end{proof}

\begin{definition} \textbf{(B-type space)}  
For $0 < r \leq +\infty,~0 < t \leq +\infty,~ s \in \mathbb{R},$ we define B-type spaces or Besov spaces on local fields $K$ as

$$ B_{rt}^s(K)=\{f \in S^{\prime}(K):~ \|f\|_{{ B_{rt}^s}(K)} < \infty\}, $$

\ni with norm

\begin{align*}
\|f\|_{{ B_{rt}^s}(K)} &= \|q^{sj} \Delta_j f \| _{l_t(L^r(K))}\\
&= \Bigg\{\sum\limits_{j=0}^{\infty} \|q^{sj} \Delta_j f\|^t_{L^r(K)}   \Bigg\}^{\frac{1}{t}}\\
&= \Bigg\{\sum_{j=0}^{\infty} q^{sjt} \bigg\{ \int_{K} |\Delta_j f|^r dx \bigg\}^{\frac{t}{r}}  \Bigg\}^{\frac{1}{t}}.
\end{align*}
\end{definition}

\begin{remark} The spaces $B_{rt}^s(K)$ are independent of the selection of sequences  $\{\phi_j\}_{j\in \mathbb{N}_0} \subset A(\Gamma).$ They are quasi-Banach spaces (Banach spaces for $r,t \geq 1$), and $ S(K) \subset B_{rt}^s(K) \subset S^{\prime}(K),$ where the first embedding is dense if $r < \infty$ and $t < \infty.$ The theory of the spaces $B_{rt}^s(K)$  has been developed  in \cite{book_impulse5}.
\end{remark}

Note that members of $B_{rt}^s(K)$ are tempered distributions and which can only be interpreted as regular distributions for sufficiently high smoothness. More
precisely, we have

\begin{align}
B_{rt}^s(K) \subset L^{\text{loc}}_1(K)~~~\text{if and only if}~~~ s > \sigma_r~~\text{for}~~0 < r \leq \infty,~0< t \leq \infty,\ \nonumber
\end{align}

\ni see \citep[Theorem 4.1.3]{book_impulse5}. Since for $0 < s < \sigma_r$ the $\delta$-distribution belongs to $B_{rt}^s(K)$, which is a singular distribution, so, in general one cannot interpret $f \in B_{rt}^s(K)$ as a regular distribution.

\begin{definition} Let $\Omega$ be a compact subset of $\Gamma$ and $0 < r \leq \infty$, then we define
\begin{align*}
S^{\Omega}(K) &= \{ \phi~:~ \phi \in S(K),~ \text{supp}~\mathcal{F}\phi \subset \Omega\}\\
 L_r^{\Omega}(K) &= \{ \psi~:~ \psi \in S^{\prime}(K),~ \text{supp}~\mathcal{F}\psi \subset \Omega,~ \|\psi~ |~ L_r\| < \infty\}.
\end{align*} 
\end{definition}

If $s$ is a real number, then
\begin{align*}
H^s_2 = \{ f~:~ f \in S^{\prime}(K),~~\|f|H^s_2\|= \|\langle \cdot \rangle^s (\mathcal{F}f)(\cdot) | L_2\| < \infty\}.
\end{align*}

By the well-known fact that $\mathcal{F}$ is a unitary operator on $L_2$, we have 
\begin{align*}
\|f~|~H^s_2\|= \|\mathcal{F}^{-1}\langle \cdot \rangle^s \mathcal{F}f ~|~ L_2\|.
\end{align*}

We will be using the following theorem in further sections. We refer \citep[Theorem 1.1.5]{book_impulse9} to the proof of the theorem.
 
\begin{theorem} \label{thm1} Let $\Omega$ be a compact subset of $K$ and $0 < r \leq \infty$. If $s > \bigg(\dfrac{1}{\text{min}(r,1)}-\dfrac{1}{2}\bigg)$ then there exists a constant $c$ such that
\begin{equation}
\|\mathcal{F}^{-1} M \mathcal{F}f ~|~ L_r\| \leq c \|M~|~H^s_2\|~ \|f ~|~ L_r\|,
\end{equation}
 
\ni holds for all $f \in L_r^{\Omega}(K) $ and all $M \in H^s_2.$ 
\end{theorem}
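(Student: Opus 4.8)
The plan is to realize the operator $f\mapsto \mathcal{F}^{-1}M\mathcal{F}f$ as a convolution and to reduce everything to estimating the kernel $k=\mathcal{F}^{-1}M$. First I would fix $\psi\in S(\Gamma)$ with $\psi\equiv 1$ on $\Omega$ and $\operatorname{supp}\psi\subset\Gamma^N$, where $N$ is chosen so that $\Omega\subset\Gamma^N$. Since $\operatorname{supp}\mathcal{F}f\subset\Omega$ we have $M\mathcal{F}f=(M\psi)\mathcal{F}f$, so writing $\tilde M=M\psi$ and $k=\mathcal{F}^{-1}\tilde M$ gives $\mathcal{F}^{-1}M\mathcal{F}f=k*f$. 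Multiplication by the fixed test function $\psi$ is bounded on $H^s_2$, hence $\|\tilde M\mid H^s_2\|\le c\,\|M\mid H^s_2\|$, and now $k$ has compact spectrum contained in $\Gamma^N$. The key local-field feature I would exploit is that a distribution whose Fourier transform is supported in $\Gamma^N$ is in fact a function that is constant on cosets of $\mathfrak{P}^N$ (each coset carrying measure $q^{-N}$); thus \emph{both} $f$ and $k$ are locally constant at the scale $\mathfrak{P}^N$.

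For $1\le r\le\infty$ I would simply invoke Young's inequality $\|k*f\mid L_r\|\le\|k\mid L_1\|\,\|f\mid L_r\|$ and control $\|k\mid L_1\|$ by a weighted Cauchy--Schwarz: $\|k\mid L_1\|=\int_K\langle x\rangle^{-s}\langle x\rangle^{s}|k(x)|\,dx\le\|\langle\cdot\rangle^{-s}\mid L_2\|\,\|\langle\cdot\rangle^{s}k\mid L_2\|$. By Plancherel and the symmetry $\langle -x\rangle=\langle x\rangle$ the last factor equals $\|\tilde M\mid H^s_2\|\le c\,\|M\mid H^s_2\|$, while $\|\langle\cdot\rangle^{-s}\mid L_2\|^2=\int_K\langle x\rangle^{-2s}\,dx$ is finite exactly when $2s>1$, i.e. $s>\tfrac12=\tfrac{1}{\min(r,1)}-\tfrac12$. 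This settles the entire range $r\ge 1$.

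The substantive case is $0<r<1$, where Young's inequality is unavailable. Here I would convert the continuous convolution into a discrete one using the local constancy noted above. Enumerating the cosets $\{x_\nu+\mathfrak{P}^N\}$ and letting $f_\nu,k_\nu$ denote the constant values of $f,k$ on them, a direct computation shows that $k*f$ is again constant on these cosets and that its values equal $q^{-N}$ times the discrete convolution $(k\star f)_\nu=\sum_\mu k_\mu f_{\nu-\mu}$. Passing between $\|\cdot\mid L_r\|$ and the corresponding weighted $\ell^r$ norms and using $\|k\star f\mid \ell^r\|\le\|k\mid\ell^r\|\,\|f\mid\ell^r\|$ — valid for $0<r\le 1$ because $|\sum a_m|^r\le\sum|a_m|^r$ — yields $\|k*f\mid L_r\|\le c_\Omega\,\|k\mid L_r\|\,\|f\mid L_r\|$, with $c_\Omega$ a fixed power of $q^N$. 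It remains to bound $\|k\mid L_r\|$ by $\|M\mid H^s_2\|$, and for this I apply Hölder with exponents $2/r$ and $2/(2-r)$ to $|k|^r=(\langle x\rangle^{s}|k|)^r\langle x\rangle^{-sr}$, obtaining $\|k\mid L_r\|\le\big(\int_K\langle x\rangle^{-2sr/(2-r)}\,dx\big)^{(2-r)/(2r)}\,\|\langle\cdot\rangle^{s}k\mid L_2\|$. The weight integral converges precisely when $\tfrac{2sr}{2-r}>1$, i.e. $s>\tfrac1r-\tfrac12=\tfrac{1}{\min(r,1)}-\tfrac12$, and the remaining factor is once more $\|\tilde M\mid H^s_2\|\le c\,\|M\mid H^s_2\|$. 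Combining the two estimates closes this case.

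The main obstacle is exactly the range $0<r<1$: the naive convolution bound fails, and in the Euclidean setting one resorts to Peetre maximal functions. On a local field this obstacle dissolves, because band-limited functions are genuinely locally constant, so the operator collapses to a discrete convolution for which $\ell^r$-subadditivity substitutes for Young's inequality. In both ranges the sharp threshold $s>\tfrac{1}{\min(r,1)}-\tfrac12$ then emerges solely from the convergence of a weighted integral of $\langle\cdot\rangle$ over $K$, reflecting that $K$ behaves like a space of homogeneous dimension one. I would finally note that throughout, the constant $c$ is allowed to depend on $\Omega$ (through $N$), consistently with the statement.
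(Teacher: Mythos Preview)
The paper does not supply its own proof of this theorem; it simply cites \cite[Theorem~1.1.5]{book_impulse9} and moves on. So there is nothing in the paper to compare your argument against. That said, your proposal is a correct, self-contained proof, and it follows the standard architecture one finds in the Euclidean case (cf.\ \cite[1.5.2--1.5.3]{book_impulse7}): localize $M$ to $\tilde M=M\psi$ with $\psi\in S(\Gamma)$ identically~$1$ on $\Omega$, rewrite the operator as convolution with $k=\mathcal{F}^{-1}\tilde M$, use Young's inequality together with a weighted Cauchy--Schwarz bound on $\|k\mid L_1\|$ for $r\ge 1$, and replace Young by the $r$-subadditivity inequality $\|a*b\mid\ell^r\|\le\|a\mid\ell^r\|\,\|b\mid\ell^r\|$ for $0<r<1$. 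Your key simplification---that on a local field a distribution with $\mathcal{F}$-support in $\Gamma^N$ is an honest function constant on cosets of $\mathfrak{P}^N$, so the convolution is literally a discrete one---is exactly right and is what allows you to bypass the Peetre maximal function machinery needed over~$\mathbb{R}^n$.

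Two small points are worth one explicit line each in a polished write-up. First, the boundedness of multiplication by $\psi$ on $H^s_2$: this follows from the ultrametric Peetre inequality $\langle x\rangle^s\le\langle y\rangle^s\langle x-y\rangle^s$ and Young's inequality applied to $\langle\cdot\rangle^s|\widehat{M\psi}|\le(\langle\cdot\rangle^s|\widehat M|)*(\langle\cdot\rangle^s|\widehat\psi|)$, using that $\widehat\psi\in S(K)$ has compact support. Second, the claim that $f\in L_r^{\Omega}(K)$ is a locally constant function (not merely a distribution) follows from $f=q^N\Phi_{\mathfrak{P}^N}*f$, the right-hand side being the pairing $x\mapsto\langle f,\,q^N\Phi_{\mathfrak{P}^N}(x-\cdot)\rangle$ of a tempered distribution with a test function depending locally constantly on~$x$. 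With these two justifications inserted, your proof is complete and the dependence of $c$ on $\Omega$ through $N$ that you flag is indeed the correct reading of the statement.
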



\section{Dilation Operator}

Our main result is the following.

\begin{theorem} \label{thm2} Let $0 < r ,t \leq \infty $, $ s > \sigma_r = \text{max}\big(\frac{1}{r} -1 , 0\big), ~k \in \mathbb{N}$ and $T_k$ defined by (\ref{eq1}). Then 
\begin{equation}
\|T_kf ~ \big| ~{ B_{rt}^s}(K) \| \leq c k^{1/t} q^{k(s-\frac{1}{r})} \|f ~\big|~ { B_{rt}^s}(K) \|,
\end{equation}

\ni for some $c$ which is independent of $k$ and for all $f \in  B_{rt}^s(K).$
\end{theorem}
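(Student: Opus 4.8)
The plan is to mimic Triebel's classical argument from \cite{book_impulse7}, but carefully tracking the constants that appear when working over a local field rather than $\mathbb{R}^n$. First I would reduce to understanding the interaction between the dilation operator $T_k$ and the Littlewood–Paley pieces $\Delta_j$. Since $(T_kf)(x) = f(\mathfrak{p}^{-k}x)$, a direct computation with the Fourier transform shows that $\mathcal{F}(T_k f)(\xi) = q^{-k}(\mathcal{F}f)(\mathfrak{p}^{k}\xi)$, so dilation by $\mathfrak{p}^{-k}$ on the physical side corresponds to dilation by $\mathfrak{p}^{k}$ on the Fourier side, which shifts the dyadic annulus $\Gamma^{j}\setminus\Gamma^{j-1}$ to $\Gamma^{j+k}\setminus\Gamma^{j+k-1}$ (using the normalization $|\mathfrak{p}| = q^{-1}$ and $|\Gamma^{k}| = q^{k}$). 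The key structural fact, exactly as over $\mathbb{R}^n$, is that $\Delta_j(T_k f)$ only "sees" those $\Delta_\ell f$ with $\ell$ close to $j-k$: more precisely $\Delta_j T_k f = T_k\big(\sum_{|\ell - (j-k)|\le 1}\Delta_\ell f\big)$ for $j \ge 1$, while for the low-frequency block $j = 0$ one collects all $\Delta_\ell f$ with $\ell \le k+1$ or so. This is the point where the extra factor of $k$ (ultimately the $k^{1/t}$) will be forced: the single output block $\Delta_0 T_k f$ receives contributions from roughly $k$ input blocks $\Delta_\ell f$, $0 \le \ell \lesssim k$, and there is no frequency separation to exploit among them.

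Next I would record the two elementary scaling estimates. The first is the $L^r$ scaling of dilation: $\|T_k g \mid L^r(K)\| = q^{k/r}\|g \mid L^r(K)\|$, which follows from the change of variables $d(\mathfrak{p}^{-k}x) = |\mathfrak{p}^{-k}|\,dx = q^{k}\,dx$. The second is the Nikol'skij-type inequality needed to handle the low-frequency term and to absorb the $\sigma_r$ threshold: for a distribution with Fourier support in $\Gamma^{m}$ (i.e. $|\xi|\le q^{m}$), one has $\|g \mid L^1(K)\| \lesssim q^{m(1/r - 1)_+}\|g \mid L^r(K)\|$ for $0 < r \le 1$, and more generally $L^\rho$–$L^r$ comparisons cost $q^{m(1/r - 1/\rho)}$; this is precisely where $s > \sigma_r$ is used so that the resulting geometric series in the low-frequency sum converges. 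Over a local field this inequality is cleaner than over $\mathbb{R}^n$ because the pieces are genuinely supported on the compact-open sets $\mathfrak{P}^m$ after inverting the Fourier transform; I would prove it (or quote it from \cite{book_impulse5, book_impulse9}) via the fact that a function constant on cosets of $\mathfrak{P}^m$ is a finite average, together with Hölder. Theorem \ref{thm1} (the Fourier multiplier theorem) is the substitute for the Mikhlin–Hörmander estimates Triebel uses, and I would invoke it to justify that replacing the sharp cutoffs $\phi_j$ by dilated cutoffs only costs a constant independent of $j$ and $k$.

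With these ingredients, the main estimate is assembled as follows. Split $\|T_k f \mid B^s_{rt}(K)\|^t = \sum_{j\ge 1} q^{sjt}\|\Delta_j T_k f\|_{L^r}^t + \|\Delta_0 T_k f\|_{L^r}^t$. For the high-frequency sum, substitute $\Delta_j T_k f = T_k\big(\sum_{|\ell-(j-k)|\le 1}\Delta_\ell f\big)$, apply the $L^r$ scaling to pull out $q^{k/r}$, use the triangle/quasi-triangle inequality in $L^r$ together with the boundedness of the (rescaled) $\Delta_\ell$ from Theorem \ref{thm1}, reindex $\ell = j - k$, and compare $q^{sjt}$ with $q^{s\ell t}$, picking up the factor $q^{skt}$; summing in $\ell$ then reproduces $\|f \mid B^s_{rt}(K)\|^t$ up to a constant, giving the clean bound $c\, q^{k(s-1/r)}\|f\mid B^s_{rt}(K)\|$ for this part, with no $k$-loss. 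For the low-frequency term $\Delta_0 T_k f$, write it as $T_k$ applied to $\sum_{\ell \le k+1}\Delta_\ell f$ (times a fixed multiplier handled by Theorem \ref{thm1}), use the $L^r$ scaling to extract $q^{k/r}$, then estimate $\big\|\sum_{\ell\le k+1}\Delta_\ell f\big\|_{L^r} \le \sum_{\ell \le k+1}\|\Delta_\ell f\|_{L^r}$ (or the Nikol'skij inequality when $r < 1$), insert and remove the weights $q^{s\ell}$, and apply Hölder's inequality in $\ell$ with exponents $t$ and $t'$ over the roughly $k$ terms — this is exactly where the factor $k^{1/t'} \cdot$(something) appears; after summing the geometric factor $\sum_\ell q^{-s\ell t' }$-type series (convergent since $s > \sigma_r \ge 0$, with the borderline subtlety at $s = \sigma_r$ being why that case is excluded) one is left with $c\, k^{1/t}q^{-k/r}\|f \mid B^s_{rt}(K)\|$, and since $q^{-k/r}\le q^{k(s-1/r)}$ for $s\ge 0$ this is absorbed into the claimed bound. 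Combining the two parts and taking $t$-th roots yields the theorem. I expect the main obstacle to be the bookkeeping in the low-frequency term: getting the Hölder split to produce exactly $k^{1/t}$ (and not a worse power), and verifying that the geometric series there and the reindexing in the high-frequency term both hold uniformly in $k$ — in particular that the constant $c$ genuinely does not depend on $k$ — requires care, and is the place where the local-field normalizations ($|\mathfrak{p}|=q^{-1}$, $|\Gamma^k|=q^k$) must be threaded through consistently.
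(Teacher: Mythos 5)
Your overall skeleton (Littlewood--Paley decomposition, computing $\mathcal{F}(T_kf)(\xi)=q^{-k}(\mathcal{F}f)(\mathfrak{p}^k\xi)$, reindexing the high-frequency blocks, treating the remaining blocks with the multiplier theorem) is the same as the paper's, but the key structural fact is stated backwards, and this changes where the factor $k^{1/t}$ comes from. Since $|\mathfrak{p}^{-k}|=q^k>1$, the map $x\mapsto\mathfrak{p}^{-k}x$ is an expansion, so $T_kf$ has \emph{shrunken} support and \emph{raised} frequencies: $\operatorname{supp}\mathcal{F}(T_kf)=\mathfrak{p}^{-k}\operatorname{supp}\mathcal{F}f$, so the input annulus $\Gamma^{\ell}\setminus\Gamma^{\ell-1}$ is carried exactly to the output annulus $\Gamma^{\ell+k}\setminus\Gamma^{\ell+k-1}$ (exactly, with no $\pm1$ smearing, because the $\phi_j$ here are sharp indicators: $\phi_j(\mathfrak{p}^{-k}\xi)=\phi_{j-k}(\xi)$ for $j\ge k+1$). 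The problematic regime is therefore \emph{not} the single output block $j=0$ receiving $k$ input blocks, as you claim, but the opposite: the single input block $\Delta_0f$ (frequencies $|\xi|\le1$) is spread over the $k+1$ output blocks $j=0,1,\dots,k$, since for $0\le j\le k$ the function $\phi_j(\mathfrak{p}^{-k}\cdot)$ is supported inside $\Gamma^0$ and hence $\phi_j(\mathfrak{p}^{-k}\cdot)\mathcal{F}f=\phi_j(\mathfrak{p}^{-k}\cdot)\phi_0\mathcal{F}f$. Relatedly, your $L^r$ scaling has the wrong sign: the change of variables gives $dx=q^{-k}\,dt$, so $\|f(\mathfrak{p}^{-k}\cdot)\mid L^r\|=q^{-k/r}\|f\mid L^r\|$, not $q^{+k/r}$; carried through literally, your high-frequency estimate would yield $q^{k(s+1/r)}$ rather than the claimed $q^{k(s-1/r)}$.

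Because of the reversed picture, your proposed mechanism for the $k$-dependence does not produce $k^{1/t}$. You bound a single $L^r$ norm of a sum of roughly $k$ pieces by H\"older in $\ell$ with exponents $(t,t')$; as you yourself note, that gives $k^{1/t'}$, which equals $k^{1/t}$ only when $t=2$, and you never reconcile this with the stated conclusion. In the paper the factor arises differently: for each $j=1,\dots,k$ one applies Theorem \ref{thm1} with $M=\phi_1(\mathfrak{p}^{j-k-1}\cdot)$, computes $\|M\mid H^{\sigma}_2\|\sim q^{(k-j+1)(\sigma-1/2)}$, chooses $\sigma$ with $\sigma_r<\sigma-\tfrac12<s$ (this is where $s>\sigma_r$ enters --- not through convergence of a geometric series in the low-frequency sum), and thereby bounds each of the $k$ weighted terms $q^{sj}\|\mathcal{F}^{-1}\phi_j(\mathfrak{p}^{-k}\cdot)\mathcal{F}f\mid L^r\|$ by the \emph{same} quantity $c\,q^{sk}\|\Delta_0f\mid L^r\|$; summing $k$ comparable terms in the outer $\ell^t$ norm over the output index $j$ gives exactly $\big(\sum_{j=1}^{k}1\big)^{1/t}=k^{1/t}$. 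To repair your argument you would need to flip the frequency geometry, correct the scaling sign, and move the count of $k$ terms from inside the $L^r$ norm to the outer $\ell^t$ sum over output blocks.
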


\begin{proof}
Recall from Definition \ref{def2}, 
\begin{equation*}
\{\phi_j\}_{j=0}^{+\infty} = \{ \Phi_{\Gamma^0},~\Phi_{\Gamma^j \setminus \Gamma^{j-1}}\}_{j=1}^{+\infty}~ \subset~ A(\Gamma),
\end{equation*}
\ni the non-homogeneous unit decomposition on $\Gamma.$ We may assume that 
\begin{equation*}
\phi_j(\xi) = \phi_1(\mathfrak{p}^{j-1} \xi),~~~~~~~ j \in \mathbb{N}.
\end{equation*}

We have 
\begin{align*} 
(\mathcal{F}f(\mathfrak{p}^{-k}\cdot))(\xi) & = \int_{K} f(\mathfrak{p}^{-k}x)\bar{\chi}_\xi(x)dx \\
& = \int_{K} f(t)\bar{\chi}_\xi(\mathfrak{p}^{k}t) q^{-k}dt \\ 
& = q^{-k} \int_{K} f(t)\bar{\chi}_{\mathfrak{p}^{k}\xi}(t) dt \\ 
& = q^{-k} (\mathcal{F}f)(\mathfrak{p}^{k}\xi). 
\end{align*}

Also
\begin{align} \nonumber
\mathcal{F}^{-1}\{\phi_j(\xi)(\mathcal{F}f(\mathfrak{p}^{-k}\cdot))(\xi)\}(x) &= q^{-k} \mathcal{F}^{-1}\{\phi_j(\xi)(\mathcal{F}f)(\mathfrak{p}^{k}\xi)\}(x) \\ \nonumber 
& = q^{-k} \int_{\Gamma} \phi_j(\xi)(\mathcal{F}f)(\mathfrak{p}^{k}\xi)\chi_x(\xi)d\xi \\ \nonumber 
& = q^{-k} \int_{\Gamma} \phi_j(\mathfrak{p}^{-k}\xi)(\mathcal{F}f)(\xi)\chi_x(\mathfrak{p}^{-k}\xi) q^k d\xi \\ \nonumber
& = q^{-k} q^k \int_{\Gamma} \phi_j(\mathfrak{p}^{-k}\xi)(\mathcal{F}f)(\xi)\chi_{\mathfrak{p}^{-k}x}(\xi)  d\xi \\ \label{eqn4}
& = \mathcal{F}^{-1}\{\phi_j(\mathfrak{p}^{-k}\xi)(\mathcal{F}f)(\xi)\}(\mathfrak{p}^{-k} x).\\ \nonumber
\end{align}

From the definition of Besov spaces with $f(\mathfrak{p}^{-k} x)$ in place of $f(x)$ and using (\ref{eqn4}), we obtain
\begin{align} \nonumber
\|f(\mathfrak{p}^{-k} x) ~ \big| ~{ B_{rt}^s}(K) \| &= \Bigg(\sum_{j=0}^{\infty} q^{sjt}\| \mathcal{F}^{-1}\{\phi_j(\xi)(\mathcal{F}f(\mathfrak{p}^{-k}\cdot))(\xi)\}(x) ~ \big|~L_r(K) \|^t   \Bigg)^{\frac{1}{t}} \\ \nonumber
&= \Bigg(\sum_{j=0}^{\infty} q^{sjt}\| \mathcal{F}^{-1}\{\phi_j(\mathfrak{p}^{-k}\xi)(\mathcal{F}f)(\xi)\}(\mathfrak{p}^{-k} x) ~ \big|~L_r(K) \|^t   \Bigg)^{\frac{1}{t}} \\ \label{eqn5}
& =q^{-k/r} \Bigg(\sum_{j=0}^{\infty} q^{sjt}\| \mathcal{F}^{-1}\phi_j(\mathfrak{p}^{-k} \cdot)\mathcal{F}f ~ \big|~L_r(K) \|^t   \Bigg)^{\frac{1}{t}}. \\ \nonumber
\end{align}

If $j \geq k+1,$ then $\phi_j(\mathfrak{p}^{-k} \xi) = \phi_1(\mathfrak{p}^{j-k-1} \xi) = \phi_{j-k}(\xi)$. This gives
\begin{align} \nonumber
q^{-k/r} & \Bigg(\sum_{j=k+1}^{\infty} q^{sjt}\| \mathcal{F}^{-1}\phi_j(\mathfrak{p}^{-k} \cdot)\mathcal{F}f ~ \big|~L_r(K) \|^t   \Bigg)^{\frac{1}{t}} \\ \nonumber
& = q^{-k/r}  \Bigg(\sum_{j=k+1}^{\infty} q^{s(j-k)t} q^{skt} \| \mathcal{F}^{-1}\phi_{j-k}\mathcal{F}f ~ \big|~L_r(K) \|^t   \Bigg)^{\frac{1}{t}} \\ \nonumber
& = q^{-k/r} q^{sk}  \Bigg(\sum_{l=1}^{\infty} q^{slt}  \| \mathcal{F}^{-1}\phi_{l}\mathcal{F}f ~ \big|~L_r(K) \|^t   \Bigg)^{\frac{1}{t}} \\ \nonumber
& = q^{k(s-1/r)}  \Bigg(\sum_{l=1}^{\infty} q^{slt}  \| \mathcal{F}^{-1}\phi_{l}\mathcal{F}f ~ \big|~L_r(K) \|^t   \Bigg)^{\frac{1}{t}} \\ \label{eqn6}
& \leq cq^{k(s-\frac{1}{r})} \|f ~ \big| ~{ B_{rt}^s}(K) \| . \\ \nonumber
\end{align}

For the remaining terms, $j = 0, 1, \cdots, k$ we use Theorem \ref{thm1} and 
\begin{align} \nonumber
\mathcal{F}^{-1}\phi_j(\mathfrak{p}^{-k} \cdot)\mathcal{F}f & = \mathcal{F}^{-1}\phi_j(\mathfrak{p}^{-k} \cdot) \phi_0\mathcal{F}f \\  \label{eqn7}
& = \mathcal{F}^{-1}\phi_j(\mathfrak{p}^{-k} \cdot) \mathcal{F}(\mathcal{F}^{-1}\phi_0\mathcal{F}f). \\ \nonumber
\end{align}

We put $h = \mathcal{F}^{-1}\phi_0\mathcal{F}f $, where $\text{supp}~\mathcal{F}h \subset \text{supp}~\phi_0 = \Gamma^0.$ If $j=0,$ we take $M = \phi_0(\mathfrak{p}^{-k} \cdot) $ and calculate 
\begin{align} \nonumber
q^{-k/r} \| \mathcal{F}^{-1}\phi_0(\mathfrak{p}^{-k}\cdot)\mathcal{F}f ~ \big|~L_r(K) \| & = q^{-k/r} \|\mathcal{F}^{-1}\phi_0(\mathfrak{p}^{-k} \cdot) \mathcal{F}(\mathcal{F}^{-1}\phi_0\mathcal{F}f) ~ \big|~L_r(K) \| \\ \label{eqn8}
& \leq c q^{-k/r} \| \phi_0(\mathfrak{p}^{-k} \cdot) ~ \big|~ H_2^{\sigma} \|~\cdot~ \| \mathcal{F}^{-1}\phi_0\mathcal{F}f ~ \big|~L_r(K) \|,\\ \nonumber
\end{align} 
\ni where $\sigma$ is an arbitrary number with $\sigma > \bigg(\dfrac{1}{\text{min}(r,1)}-\dfrac{1}{2}\bigg)$. It is easy to see that 
\begin{align*}
\| \phi_0(\mathfrak{p}^{-k} \cdot) ~ \big|~ H_2^{\sigma} \| & = \| \langle \xi \rangle^{\sigma} (\mathcal{F}\phi_0(\mathfrak{p}^{-k} \cdot))(\xi) ~ \big|~ L_2(K) \| \\ 
& = q^{-k} \| \langle \xi \rangle^{\sigma} (\mathcal{F}\phi_0)(\mathfrak{p}^{k}\xi) ~ \big|~ L_2(K) \| \\ 
& = q^{-k} q^{k/2} \| \langle \mathfrak{p}^{-k}\xi \rangle^{\sigma} (\mathcal{F}\phi_0)(\xi) ~ \big|~ L_2(K) \| \\ 
& = q^{-k/2} q^{k\sigma} \|(\mathcal{F}\phi_0)(\xi) ~ \big|~ L_2(K) \| \\ 
& = q^{-k/2} q^{k\sigma} \|\phi_0 ~ \big|~ L_2(K) \| \\
& =  q^{k(\sigma-1/2)}.
\end{align*}

We may assume that $s > \sigma-1/2.$ This gives 
\begin{align} \nonumber 
q^{-k/r} \| \mathcal{F}^{-1}\phi_0(\mathfrak{p}^{-k} \cdot)\mathcal{F}f ~ \big|~L_r(K) \| & \leq c q^{-k/r} q^{ks}  \| \mathcal{F}^{-1}\phi_0\mathcal{F}f ~ \big|~L_r(K) \| \\ \label{no1}
& \leq c^{\prime} q^{k(s-\frac{1}{r}}) \|f ~ \big| ~{ B_{rt}^s}(K) \|. \\ \nonumber
\end{align} 

Finally, it remains to consider $1 \leq j \leq k$. This is the crucial step leading to $k^{1/t}.$ In this case, $\phi_j(\mathfrak{p}^{-k}\xi) = \phi_1(\mathfrak{p}^{j-k-1} \xi)$ and 

\begin{align} \nonumber
\mathcal{F}^{-1}\phi_j(\mathfrak{p}^{-k} \cdot)\mathcal{F}f & = \mathcal{F}^{-1}\phi_1(\mathfrak{p}^{j-k-1} \cdot) \phi_0\mathcal{F}f \\ \label{eqn11}
& = \mathcal{F}^{-1}\phi_1(\mathfrak{p}^{j-k-1} \cdot) \mathcal{F}(\mathcal{F}^{-1}\phi_0\mathcal{F}f). \\ \nonumber
\end{align}

We put $h = \mathcal{F}^{-1}\phi_0\mathcal{F}f $, where $\text{supp}~\mathcal{F}h \subset \text{supp}~\phi_0 = \Gamma^0$, and  we take $M = \phi_1(\mathfrak{p}^{j-k-1} \cdot) $. This gives 
\begin{align} \nonumber
\| \mathcal{F}^{-1}\phi_j(\mathfrak{p}^{-k} \cdot)\mathcal{F}f ~ \big|~L_r(K) \| & =  \|\mathcal{F}^{-1}\phi_1(\mathfrak{p}^{j-k-1} \cdot) \mathcal{F}(\mathcal{F}^{-1}\phi_0\mathcal{F}f) ~ \big|~L_r(K) \| \\ \label{eqn12}
& \leq c  \| \phi_1(\mathfrak{p}^{j-k-1} \cdot) ~ \big|~ H_2^{\sigma} \|~\cdot~ \| \mathcal{F}^{-1}\phi_0\mathcal{F}f ~ \big|~L_r(K) \|, \\ \nonumber
\end{align}
\ni where $\sigma$ is an arbitrary number with $\sigma > \bigg(\dfrac{1}{\text{min}(r,1)}-\dfrac{1}{2}\bigg)$ and 
\begin{align} \nonumber
\| \phi_1(\mathfrak{p}^{j-k-1} \cdot) ~ \big|~ H_2^{\sigma} \| & = \| \langle \xi \rangle^{\sigma} (\mathcal{F}\phi_1(\mathfrak{p}^{j-k-1} \cdot)(\xi) ~ \big|~ L_2(K) \| \\ \nonumber
& = q^{-(k-j+1)} \| \langle \xi \rangle^{\sigma} (\mathcal{F}\phi_1)(\mathfrak{p}^{(k-j+1)}\xi) ~ \big|~ L_2(K) \| \\ \nonumber
& = q^{-(k-j+1)} q^{(k-j+1)/2} \| \langle \mathfrak{p}^{(j-k-1)}\xi \rangle^{\sigma} (\mathcal{F}\phi_1)(\xi) ~ \big|~ L_2(K) \| \\ \nonumber
& = q^{-(k-j+1)/2} q^{(k-j+1)\sigma} \|(\mathcal{F}\phi_0)(\xi) ~ \big|~ L_2(K) \| \\ \nonumber
& = q^{-(k-j+1)/2} q^{(k-j+1)\sigma} \|\phi_1 ~ \big|~ L_2(K) \| \\ \label{eqn13}
& =  (q-1)^{1/2}q^{(k-j+1)(\sigma-1/2)}. \\ \nonumber
\end{align}

Using (\ref{eqn12}) and (\ref{eqn13}), we obtain
\begin{align} \nonumber
q^{-k/r} & \Bigg(\sum_{j=1}^{k} q^{sjt}\| \mathcal{F}^{-1}\phi_j(\mathfrak{p}^{-k} \cdot)\mathcal{F}f ~ \big|~L_r(K) \|^t   \Bigg)^{\frac{1}{t}} \\ \nonumber
& \leq c q^{-k/r}  \Bigg(\sum_{j=1}^{k} q^{sjt} q^{(k-j)(\sigma-1/2)t}(q-1)^{t/2}q^{(\sigma-1/2)t}\| \mathcal{F}^{-1}\phi_0\mathcal{F}f ~ \big|~L_r(K) \|^t  \Bigg)^{\frac{1}{t}} \\ \nonumber
& \leq c q^{-k/r}  \Bigg(\sum_{j=1}^{k} q^{sjt} q^{(k-j)st}(q-1)^{t/2}q^{st}\| \mathcal{F}^{-1}\phi_0\mathcal{F}f ~ \big|~L_r(K) \|^t   \Bigg)^{\frac{1}{t}}~~~~~~~~(\because~s > \sigma-1/2) \\ \nonumber
& = c q^{-k/r}  \Bigg(\sum_{j=1}^{k} q^{skt} (q-1)^{t/2}q^{st}\| \mathcal{F}^{-1}\phi_0\mathcal{F}f ~ \big|~L_r(K) \|^t   \Bigg)^{\frac{1}{t}} \\ \nonumber
& = c q^{-k/r} q^{sk} (q-1)^{1/2}q^{s} \| \mathcal{F}^{-1}\phi_0\mathcal{F}f ~ \big|~L_r(K) \| \Bigg(\sum_{j=1}^{k} 1 \Bigg)^{\frac{1}{t}} \\ \label{eqn14} 
& \leq c_1 q^{k(s-1/r)} k^{1/t} \|f ~ \big| ~{ B_{rt}^s}(K) \|. \\ \nonumber 
\end{align}

Finally, (\ref{eqn14}) together with (\ref{no1}),  (\ref{eqn6}) and  (\ref{eqn5}) gives the estimate i.e, 
\begin{align*} 
\|f(\mathfrak{p}^{-k} x) ~ \big| ~{ B_{rt}^s}(K) \| \leq c k^{1/t}q^{k(s-\frac{1}{r})} \|f ~\big|~ { B_{rt}^s}(K) \|,
\end{align*}
\ni which is the desired result.
\end{proof}

\section{Localization Property}
For each $i \in \mathbb{Z},$ we choose an element $z^{k,i} \in K,~k \in \mathbb{N}_0,$ so that the subsets $\mathfrak{P}^{k,i} = z^{k,i}+ \mathfrak{P}^i \subset K$ satisfy $\mathfrak{P}^{k,i} \cap \mathfrak{P}^{l,i} = \emptyset$ if $k \neq l$ and $\cup_{k=0}^{\infty} \mathfrak{P}^{k,i} = K,$ moreover, we choose, $z^{0,i}$ so that $\mathfrak{P}^{0,i}= \mathfrak{P}^{i}.$

Let $f \in S^\prime(K)$ with $\text{supp}~f \subset \mathfrak{P}^i,$ and let
\begin{equation} \label{1}
f^j(x) =  f(\mathfrak{p}^{-j}(x-z^{k,i})),~~~~~~ j \in \mathbb{N},~ x \in K. 
\end{equation}

Of course, $f^j(x),~j \in \mathbb{N}$ have mutually disjoint supports.

\begin{theorem} \label{thm3}
Let $0 < t \leq r \leq \infty,~ s > \sigma_r= \text{max}\big(\frac{1}{r} -1 , 0\big)$ and $f^j(x)$ is defined by (\ref{1}). Then  
\begin{equation} \label{2}
\bigg(\sum_{k=0}^{\infty} \|  f^j(x) ~ |~ B^s_{rt}(K) \|^{r} \bigg)^{1/r}  \leq  cj^{1/t} q^{j(s-\frac{1}{r})} \|f ~\big|~ { B_{rt}^s}(K)\|,
\end{equation}

\ni for some $c$ which is independent of $k$ and for all $f \in B_{rt}^s(K).$
\end{theorem}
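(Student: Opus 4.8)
The plan is to reduce the localization estimate to the dilation estimate (Theorem \ref{thm2}) by exploiting the mutual disjointness of the supports of the $f^j$. The key observation is that each $f^j(x) = f(\mathfrak{p}^{-j}(x - z^{k,i}))$ is a translate of the dilate $(T_j f)(x) = f(\mathfrak{p}^{-j}x)$, and Besov norms over local fields are translation-invariant (the operators $\Delta_\ell$ commute with translations since they are Fourier multipliers, and $L^r(K)$ is translation-invariant). Hence $\|f^j \mid B^s_{rt}(K)\| = \|T_j f \mid B^s_{rt}(K)\|$ for every $k$, and by Theorem \ref{thm2} this is bounded by $c\, j^{1/t} q^{j(s-1/r)} \|f \mid B^s_{rt}(K)\|$.

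**The main work** is then to pass from a bound on each individual term to a bound on the $\ell^r$-sum over $k$, and this is where the hypothesis $t \le r$ and the disjointness of supports enter essentially. First I would write, using Definition of the Besov norm,
\begin{align*}
\sum_{k=0}^{\infty} \| f^j \mid B^s_{rt}(K)\|^{r}
&= \sum_{k=0}^{\infty} \Bigg( \sum_{\ell=0}^{\infty} q^{s\ell t} \| \Delta_\ell f^j \mid L^r(K)\|^{t}\Bigg)^{r/t}.
\end{align*}
Since $r/t \ge 1$, I would like to bring the outer sum inside; but the correct direction is rather to interchange by the continuous Minkowski (triangle) inequality in $\ell^{r/t}$ applied to the $\ell$-sum, giving
\begin{align*}
\Bigg( \sum_{k=0}^{\infty} \Big( \sum_{\ell} q^{s\ell t}\|\Delta_\ell f^j_k\|_{L^r}^{t}\Big)^{r/t}\Bigg)^{t/r}
\le \sum_{\ell} q^{s\ell t} \Bigg( \sum_{k=0}^{\infty} \|\Delta_\ell f^j_k \mid L^r(K)\|^{r}\Bigg)^{t/r},
\end{align*}
where I write $f^j_k$ for the $k$-th translated dilate. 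The point of this rearrangement is that, because the $f^j_k$ have pairwise disjoint supports (being supported in the disjoint sets $\mathfrak{P}^{k,i}$, after noting $\Delta_\ell$ only enlarges supports by a controlled amount — this needs a small argument using the ultrametric structure and the compact support of $\widecheck{\phi}_\ell$), the functions $\Delta_\ell f^j_k$ also have essentially disjoint supports, so $\sum_k \|\Delta_\ell f^j_k\|_{L^r}^{r} = \| \sum_k \Delta_\ell f^j_k \|_{L^r}^{r}$, and each piece reassembles into $\Delta_\ell$ of a single translate-independent object of norm controlled by $\|\Delta_{\ell'}(T_j f)\|_{L^r}$.

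**The hard part** will be making the disjoint-support step fully rigorous over a local field: one must check that the Littlewood--Paley pieces $\Delta_\ell f^j_k$ are supported in a fixed-size neighborhood of $\mathfrak{P}^{k,i}$ and that, after choosing the index $i$ large enough (or $j$ appropriately), these neighborhoods remain disjoint; here the non-Archimedean geometry actually helps, since ``enlarging'' a ball $\mathfrak{P}^{k,i}$ by convolution with something supported in $\mathfrak{P}^{i-j}$ either leaves it unchanged or replaces it by a strictly larger ball of the same partition type, so disjointness is preserved. Once the interchange and the disjoint-support identity are in place, I would apply Theorem \ref{thm2} to $T_j f$ term by term, absorb the resulting $c\, j^{1/t} q^{j(s-1/r)}$ factor outside, and re-sum in $\ell$ to recover $\|f \mid B^s_{rt}(K)\|$, obtaining \eqref{2}. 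I expect the estimate to be essentially sharp in its dependence on $j$, mirroring the factor $k^{1/t}$ in Theorem \ref{thm2}.
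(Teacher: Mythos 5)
Your proposal follows essentially the same route as the paper's proof: strip off the dilation by applying Theorem \ref{thm2} together with translation invariance of the Besov norm, interchange the $k$- and $\ell$-sums via the triangle inequality in $\ell^{r/t}$ (which is exactly where the hypothesis $t\le r$ enters), and then collapse the $\ell^r$-sum over $k$ using the (near-)disjointness of the supports of the translated Littlewood--Paley pieces. The step you single out as ``the hard part'' --- rigorously justifying $\sum_{k}\|\Delta_\ell f(\cdot-z^{k,i})\|_{L^r}^{r}\lesssim \|\Delta_\ell f\|_{L^r}^{r}$ despite $\Delta_\ell$ enlarging supports --- is precisely the step the paper itself does not prove, asserting only that $\|\Delta_j f(\cdot-z^{k,i})~|~l^r(L^r(K))\|\sim\|\Delta_j f~|~L^r(K)\|$, so your outline contains everything the published argument does.
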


To prove the Theorem (\ref{thm3}), we need the following proposition. 
\begin{proposition} \label{p1}
Let $ s > \sigma_r= \text{max}\big(\frac{1}{r} -1 , 0\big),~0 < r,t \leq \infty$ and  $\{u_j\}_{j=0}^{\infty}$ be a sequence in $S^\prime(K)$ satisfying 
\begin{align*}
\text{supp}~ \mathcal{F}u_0 \subset \Gamma^0,~~~\text{supp}~ \mathcal{F}u_j \subset \Gamma^j \setminus \Gamma^{j-1}~~\text{for}~ j \in \mathbb{N}.
\end{align*}

If $\| \{q^{sj} \| u_j ~ |~ {L^r(K)} \|\}~ | ~ l^t\|$ is finite. Then there exists $c > 0$ such that 
\begin{equation} \label{3}
\bigg\|\sum_{j=0}^{\infty} u_j~ |~ B^s_{rt}(K) \bigg\|~ \leq~ c   \Bigg\{\sum_{j=0}^{\infty} q^{sjt} \| u_j ~ |~ {L^r(K)} \|^t  \Bigg\}^{\frac{1}{t}}.
\end{equation} 
\end{proposition}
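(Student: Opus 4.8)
The plan is to exploit the extreme localization of the Fourier supports of the $u_j$: since $\Gamma^j\setminus\Gamma^{j-1}=\{\xi\in\Gamma:|\xi|=q^j\}$ for $j\in\mathbb{N}$ and $\Gamma^0=\{\xi:|\xi|\le 1\}$, the spectrum of $u_j$ sits on a single ``sphere'' $|\xi|=q^j$. This makes the statement a reassembly (converse Littlewood--Paley) lemma. First I would fix an admissible system $\{\phi_l\}_{l\in\mathbb{N}_0}\in A(\Gamma)$ used to compute the Besov norm --- most conveniently the canonical one $\phi_0=\Phi_{\Gamma^0}$, $\phi_l=\Phi_{\Gamma^l\setminus\Gamma^{l-1}}$ --- and set $v=\sum_{j=0}^{\infty}u_j$. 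Before anything else one checks that this series converges in $S'(K)$: for $\varphi\in S(K)$ the function $\mathcal{F}^{-1}\varphi\in S(\Gamma)$ has compact support, hence meets $\{|\xi|=q^j\}$ for only finitely many $j$, so $\langle u_j,\varphi\rangle=\langle\mathcal{F}u_j,\mathcal{F}^{-1}\varphi\rangle=0$ for all large $j$ and the tested series terminates; thus $v\in S'(K)$ is well defined.

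The core step is to compute the Littlewood--Paley pieces of $v$. We have $\Delta_l v=\sum_j\mathcal{F}^{-1}(\phi_l\,\mathcal{F}u_j)$, and I claim every cross term with $j\ne l$ vanishes: $\operatorname{supp}\phi_l\subseteq\{q^{l-1}<|\xi|<q^{l+1}\}$ (respectively $\{|\xi|\le1\}$ when $l=0$), while $\operatorname{supp}\mathcal{F}u_j\subseteq\{|\xi|=q^j\}$ (respectively $\{|\xi|\le1\}$ when $j=0$); since the valuation attains only the values $q^k$, the relation $q^{l-1}<q^j<q^{l+1}$ forces $j=l$, and the $l=0$ term pairs only with $u_0$. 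Hence $\Delta_l v=\mathcal{F}^{-1}(\phi_l\,\mathcal{F}u_l)$. For the canonical system $\phi_l\equiv1$ on $\operatorname{supp}\mathcal{F}u_l$, so in fact $\Delta_l v=u_l$ and (\ref{3}) holds with $c=1$. If instead one insists on an arbitrary $\{\phi_l\}\in A(\Gamma)$, I would bound $\|\Delta_l v\mid L^r(K)\|=\|\mathcal{F}^{-1}(\phi_l\,\mathcal{F}u_l)\mid L^r(K)\|$ by Theorem \ref{thm1} with $M=\phi_l$ and $f=u_l\in L_r^{\{|\xi|=q^l\}}(K)$, using the dilation normalization $\phi_l(\xi)=\phi_1(\mathfrak{p}^{l-1}\xi)$ together with the $H_2^{\sigma}$-scaling computation from the proof of Theorem \ref{thm2} to see that the resulting multiplier constant is independent of $l$; this yields $\|\Delta_l v\mid L^r(K)\|\le c\,\|u_l\mid L^r(K)\|$ with $c$ uniform in $l$.

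Finally I would raise this inequality to the power $t$, multiply by $q^{slt}$, sum over $l\in\mathbb{N}_0$, and take the $t$-th root --- with the obvious replacement of $l^t$ and $L^r$ by sup norms when $t=\infty$ or $r=\infty$ --- which is precisely (\ref{3}) and in particular shows $v\in B^s_{rt}(K)$. The only point that is not purely formal is the uniformity in $l$ of the multiplier constant in the general-system version of the argument, and that is handled exactly as in Section 3 via dilation invariance of the spaces involved. I would also note that the hypothesis $s>\sigma_r$ is not actually used in this direction --- the sphere-type spectral condition removes the need for any lower bound on $s$ --- and is stated here only because Proposition \ref{p1} will be invoked in that range in the proof of Theorem \ref{thm3}.
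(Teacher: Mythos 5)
Your argument is correct, and its core is actually simpler than the paper's. Both proofs share the same skeleton --- observe that the disjointness of the spectral ``spheres'' $\{|\xi|=q^j\}$ kills all cross terms, so that $\Delta_l\big(\sum_j u_j\big)=\mathcal{F}^{-1}(\phi_l\,\mathcal{F}u_l)$, and then sum the resulting $L^r$ bounds --- but you diverge at the key step. The paper estimates $\|\Delta_l u_l\mid L^r\|$ by invoking the Fourier multiplier theorem (Theorem \ref{thm1}) with $M=\phi_l$ and an $H_2^{\lambda}$ bound on $\phi_l$; you instead note that for the system actually used to define $B^s_{rt}(K)$ one has $\phi_l\equiv 1$ on $\operatorname{supp}\mathcal{F}u_l$, hence $\Delta_l v=u_l$ exactly and the inequality holds with $c=1$, no multiplier theorem needed. (In fact the support and partition-of-unity conditions in Definition \ref{def2} force every admissible system to coincide with the canonical one on each sphere $|\xi|=q^j$, so your ``general system'' fallback is never required.) Your route buys two things: it sidesteps the delicate point that Theorem \ref{thm1} is stated for a fixed compact $\Omega$, so applying it with $\Omega=\Gamma^l$ growing in $l$ requires exactly the rescaling you describe to keep the constant uniform (the paper's intermediate bound $\|\phi_k\mid H_2^{\lambda}\|\le c\,q^{-(k-1)(\lambda-1/2)}$ is not what a direct computation of $\|\langle\cdot\rangle^{\lambda}\mathcal{F}\phi_k\mid L_2\|$ gives, which rather grows like $q^{k/2}$, although the final conclusion survives); and it makes transparent your correct side remark that the hypothesis $s>\sigma_r$ plays no role in Proposition \ref{p1} itself. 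Your additional verification that $\sum_j u_j$ converges in $S'(K)$ --- because $\mathcal{F}\varphi$ has compact support for $\varphi\in S(K)$, so only finitely many terms pair nontrivially --- fills a point the paper passes over in silence.
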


\begin{proof}
Let $\{u_j\}_{j=0}^{\infty}$ be a sequence in $S^\prime(K)$ satisfying $\text{supp}~ \mathcal{F}u_0 \subset \Gamma^0$ and $\text{supp}~ \mathcal{F}u_j \subset \Gamma^j \setminus \Gamma^{j-1}~~\text{for}~ j \in \mathbb{N}.$  Then, we have
\begin{align*}
\Delta_k \bigg(\sum_{j=0}^{\infty} u_j \bigg) & = \sum_{j=0}^{\infty} \Delta_k u_j \\
& =   \Delta_k u_k, 
\end{align*}

\ni since $\text{supp}~ \mathcal{F}(\Delta_k u_j) \neq \emptyset $ only if $j=k.$ By hypothesis $s > \text{max}\big(\frac{1}{r} -1 , 0\big),$ we can choose a real number $\lambda$ satisfying $s > \lambda - 1/2 > \text{max}\big(\frac{1}{r} -1 , 0\big).$ Then by Theorem \ref{thm1}, we have

\begin{equation} \label{11}
\|\Delta_ku_k~|~L_r\| \leq c \|\phi_k~|~H^\lambda_2\|~ \|u_k~|~ L_r\|, 
\end{equation}
and 
\begin{equation} \label{12}
\|\phi_k~|~H^\lambda_2\| \leq c~ q^{-(k-1)(\lambda - \frac{1}{2})}.
\end{equation}
Therefore,
\begin{align*} 
 \| \Delta_k \bigg(\sum_{j=0}^{\infty} u_j \bigg)~ |~ L^r(K)\| ~& = \| \Delta_k u_k~ |~ L^r(K) \| \\
& \leq~ c q^{-(k-1)(\lambda - \frac{1}{2})} \|  u_k~ |~ L^r(K) \|.
\end{align*}
Hence,
\begin{align*}
\Big(\sum_{k=0}^{\infty} q^{kst} \| \Delta_k \bigg(\sum_{j=0}^{\infty} u_j \bigg)~ |~ L^r(K)\|^t \Big)^{1/t} ~ & \leq ~ c^{\prime}   \bigg( \sum_{k=0}^{\infty} q^{k(s-(\lambda - \frac{1}{2}))t} \|  u_k~ |~ L^r(K) \|^t \bigg)^{1/t} \\
& \leq~ c^{\prime} \bigg(\sum_{k=0}^{\infty} q^{kst} \|  u_{k}~ |~ L^r(K) \|^t \bigg)^{1/t}.
\end{align*}
\end{proof}
\begin{proof}[Proof of Theorem \ref{thm3}]

Let $s > \sigma_r~\text{and}~ 0 < t \leq r \leq \infty.$ Then, by Theorem \ref{thm2}, we have
\begin{equation} \label{17}
\|f(\mathfrak{p}^{-j}(x-z^{k,i})) ~ \big| ~{ B_{rt}^s}(K) \|~ \leq ~c j^{1/t} q^{j(s-\frac{1}{r})} \|f(x-z^{k,i}) ~\big|~ { B_{rt}^s}(K) \|.
\end{equation}
  
Therefore in order to complete the proof of Theorem \ref{thm3} it is enough to show that
\begin{equation} \label{16}
\bigg(\sum_{k=0}^{\infty} \|  f(x-z^{k,i}) ~ |~ B^s_{rt}(K) \|^{r} \bigg)^{1/r} ~\leq~ c\|f ~\big|~ { B_{rt}^s}(K) \|.
\end{equation}

To prove (\ref{16}), observe the following inequality which will follow by Proposition \ref{p1},
\begin{align*}
\Bigg\| \sum_{j=0}^{\infty} \Delta_j f(x-z^{k,i})~ \big|~ B^s_{rt}(K) \Bigg\|~ \leq~ c \bigg(\sum_{j=0}^{\infty} q^{sjt}\|  \Delta_j f(x-z^{k,i}) ~ |~ {L^r(K)}\|^{t} \bigg)^{1/t}.
\end{align*}

Since
\begin{align*}
\text{supp}~ \mathcal{F}(\Delta_0 f(x-z^{k,i})) & = \text{supp}~(\phi_0\mathcal{F}f) \\
& \subset \text{supp}~\phi_0 \subset \Gamma^0,
\end{align*}
\ni and
\begin{align*}
\text{supp}~ \mathcal{F}(\Delta_j f(x-z^{k,i})) & = \text{supp}~(\phi_j\mathcal{F}f) \\
& \subset \text{supp}~\phi_j \subset \Gamma^j \setminus \Gamma^{j-1} ,~~~~j \in \mathbb{N}.
\end{align*}

This implies that,
\begin{align*}
\|f(x-z^{k,i})~ \big|~ B^s_{rt}(K)\|^r ~ \leq ~ c \bigg(\sum_{j=0}^{\infty} q^{sjt}\|  \Delta_j f(x-z^{k,i}) ~ |~ {L^r(K)}\|^{t} \bigg)^{r/t}.
\end{align*}

Then it holds that
\begin{equation} \label{4}
 \bigg(\sum_{k=0}^{\infty} \|  f(x-z^{k,i}) ~ |~ B^s_{rt}(K) \|^{r} \bigg)^{1/r}~ \leq~ c \Bigg(\sum_{k=0}^{\infty}\bigg(\sum_{j=0}^{\infty} q^{sjt}\|  \Delta_j f(x-z^{k,i})~ |~ {L^r(K)}\|^{t} \bigg)^{r/t}\Bigg)^{1/r}.
\end{equation}

Since, $r \geq t$ which implies that $r/t \geq 1$. Then by using triangle inequality
\begin{align*}
\|(q^{sjt}\|\Delta_j f(x-z^{k,i})\|_{L^r})_{k \in \mathbb{N}_0}\|_{l^{r}(l^t)} & = \Bigg(\sum_{k=0}^{\infty}\bigg(\sum_{j=0}^{\infty} q^{sjt}\|  \Delta_j f(x-z^{k,i})~ |~ {L^r(K)}\|^{t} \bigg)^{r/t}\Bigg)^{1/r}\\
 & \leq ~  \Bigg(\sum_{j=0}^{\infty}q^{sjt}\bigg(\sum_{k=0}^{\infty} \|  \Delta_j f(x-z^{k,i})~ |~ {L^r(K)}\|^{t\times \frac{r}{t}} \bigg)^{t/r}\Bigg)^{\frac{1}{r}\times \frac{r}{t}} \\
 & = ~  \Bigg(\sum_{j=0}^{\infty}q^{sjt}\bigg(\sum_{k=0}^{\infty} \|  \Delta_j f(x-z^{k,i})~ |~ {L^r(K)}\|^{r} \bigg)^{t/r}\Bigg)^{\frac{1}{t}} \\
& = ~  \Bigg(\sum_{j=0}^{\infty}q^{sjt}\|  \Delta_j f(x-z^{k,i})~ |~ l^r(L^r(K))\|^{t}\Bigg)^{\frac{1}{t}}.
\end{align*}

So, the inequality (\ref{4}) becomes 
\begin{equation} \label{5}
 \bigg(\sum_{k=0}^{\infty} \|  f(x-z^{k,i}) ~ |~ B^s_{rt}(K) \|^{r} \bigg)^{1/r}~ \leq~ c \Bigg(\sum_{j=0}^{\infty}q^{sjt}\|  \Delta_j f(x-z^{k,i})~ |~ l^r(L^r(K))\|^{t}\Bigg)^{\frac{1}{t}}.
\end{equation}

Since $\|  \Delta_j f(x-z^{k,i})~ |~ l^r(L^r(K))\|~ \sim~ \|  \Delta_j f~ |~ {L^r(K)}\|.$ Hence

\begin{align} \nonumber
\bigg(\sum_{k=0}^{\infty} \|  f(x-z^{k,i}) ~ |~ B^s_{rt}(K) \|^{r} \bigg)^{1/r}~ & \leq~ c \Bigg(\sum_{j=0}^{\infty}q^{sjt}\|  \Delta_j f(x-z^{k,i})~ |~ l^r(L^r(K))\|^{t}\Bigg)^{\frac{1}{t}} \\ \nonumber
& \leq~ c \Bigg(\sum_{j=0}^{\infty}q^{sjt}\|  \Delta_j f~ |~ L^r(K)\|^{t}\Bigg)^{\frac{1}{t}} \\ \label{6}
& =~ c \|f ~\big|~ { B_{rt}^s}(K) \|. \\ \nonumber
\end{align}

Now (\ref{6}) together with (\ref{17}) gives
\begin{align*}
\bigg(\sum_{k=0}^{\infty} \|  f^j(x) ~ |~ B^s_{rt}(K) \|^{r} \bigg)^{1/r}  \leq  cj^{1/t} q^{j(s-\frac{1}{r})} \|f ~\big|~ { B_{rt}^s}(K)\|.
\end{align*}

This completes the proof.
\end{proof}

\bibliographystyle{achemso}

\end{document}